\newtheorem{theorem}{Theorem}[section]
\newtheorem{proposition}[theorem]{Proposition}
\newtheorem{conjecture}{Conjecture} 
\theoremstyle{remark}
\newtheorem{remark}[theorem]{Remark}
\newcommand{\R}{\mathbb{R}}
\newcommand{\HH}{\mathcal{H}} 
\definecolor{mycolor}{RGB}{229,228,200}
\definecolor{pinkcolor}{RGB}{247,202,193}
\newtcolorbox{warningbox}{
    colback=pinkcolor,    
    colframe=pinkcolor,   
    boxrule=0pt,
    sharp corners,      
    left=0pt,
    right=0pt,
    top=0pt,
    bottom=0pt,
    breakable}
\newtcolorbox{changebox}{
    colback=mycolor,    
    colframe=mycolor,   
    boxrule=0pt,
    sharp corners,      
    left=0pt,
    right=0pt,
    top=0pt,
    bottom=0pt,
    breakable}
\author[C. Labourie]{Camille Labourie}
\author[A. Lemenant]{Antoine Lemenant}
\address[C. Labourie]{Universit\'e de Lorraine -- CNRS, UMR 7502 IECL, BP 70239
54506 Vandoeuvre-lès-Nancy,  France}
\email{camille.labourie@univ-lorraine.fr}
\address[A. Lemenant]{Universit\'e de Lorraine -- CNRS, UMR 7502 IECL, BP 70239
54506 Vandoeuvre-lès-Nancy,  France}
\email{antoine.lemenant@univ-lorraine.fr}
\date{\today}
\title{Finite number of traces for Mumford-Shah minimizers in dimension 2}
\begin{document}

\begin{abstract}
In this short note, we answer a question raised  by E. De Giorgi, showing that a Mumford-Shah minimizer in dimension 2 can admit at most  three maximum limit values as approaching the singular set. This result stems from tools developed in the early 2000's by G. David, A. Bonnet, and J.-C. L\'eger.     
\end{abstract}

\maketitle


\tableofcontents

\section{Introduction}


Let $\Omega \subset \R^N$ be open and  $g\in L^\infty(\Omega)$. We say that $(u,K)$ is a Mumford-Shah minimizer when it is a solution for the problem
\begin{eqnarray}
\min_{(u,K)} \int_{\Omega \setminus K} |\nabla u|^2 \;dx + \int_{\Omega \setminus K} |u-g|^2 \;dx + \mathcal{H}^{N-1}(K), \label{MSprob}
\end{eqnarray}
where the minimum is taken over all $K\subset \Omega$  relatively closed,  and $u \in H^1(\Omega \setminus K)$.

 The Mumford-Shah problem is a cornerstone in the study of free discontinuity problems within the calculus of variations, garnering significant attention since the 1990s.
A central aspect of this research is the Mumford-Shah conjecture, which proposes a precise characterization of the structure of minimizers, predicting that the singular set is a (locally) finite union of $C^1$ arcs that are almost-disjoint, and whose extremities can be either free (crack-tips) or triple junctions.
We refer to the books \cite{afp,d,delF} for an introduction to the Mumford-Shah functional.  
  
In his famous 1991 paper \cite{dg91}, E. De Giorgi has stated a list of 9 conjectures about free discontinuity problems. After 34 years,  most of them are still unsolved. Actually, to the best of the authors' knowledge, only Conjecture number 1 and 3 are known to be true. Conjecture~1 deals with the   higher integrability of the gradient,  and was established by C. De Lellis and M. Focardi  \cite{delF1}  in dimension~2 and then by G. De Philippis and A. Figalli  \cite{DPF} in any dimension.  Conjecture 3 follows from the $C^1$ regularity result of L. Ambrosio, N. Fusco and D. Pallara \cite{afpArt}. Conjecture 6 is a generalization of  the Mumford-Shah conjecture in higher dimensions.

The present note concerns Conjecture number 2. Indeed, among the list of E. De Giorgi's open questions \cite{dg91}, one can find the following one about the number of possible traces for $u$ when approaching the singular set $K$:

\setcounter{conjecture}{1} 
\begin{conjecture}[E. De Giorgi, Conjecture 2 page 56 of \cite{dg91}]\label{conjecture} If $(u,K)$ is a minimizing pair of the functional in  \eqref{MSprob}, and if we set 
$$E(x)=\bigcap \left\{C \;: \; C\subset \R \text{ closed set such that } \lim_{y\to x} {\rm dist}(u(y),C)=0\right\},$$
then, for every $x\in \Omega$, $E(x)$ has a finite cardinality $\alpha(x)\leq N+1$.
\end{conjecture}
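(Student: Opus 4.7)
The plan is to perform a blow-up analysis at each point $x \in \overline{K} \cap \Omega$ and to exploit the classification of global Mumford--Shah minimizers in the plane (Bonnet, completed by David) to identify the possible local geometries. For $x \in \Omega \setminus \overline{K}$, $u$ is harmonic in a neighborhood of $x$, hence continuous, so $\alpha(x) = 1$. For $x \in \overline{K} \cap \Omega$, I would look at the rescalings $K_r := r^{-1}(K-x)$ together with the correspondingly scaled functions and extract subsequential limits $(u_0, K_0)$. By Bonnet's theorem, $K_0$ is either empty, a straight line, a propeller (three half-lines meeting at $120^\circ$), or a half-line (crack-tip); correspondingly $u_0$ is constant, locally constant on the two half-planes, locally constant on the three sectors, or a rotation/translation of the reference crack-tip profile.

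Next, I would invoke Bonnet's monotonicity of the normalized Mumford--Shah energy to define a density $\theta(x)$ at each singular point, and check that the four geometric types above correspond to the four distinct values $\theta(x) \in \{0, 1/2, 1, 3/2\}$. This assigns a well-defined \emph{type} to $x$ even if the blow-up is not unique a priori, and rules out oscillations between different classes along different scale sequences $r_n \to 0$.

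In each case I would then upgrade the qualitative blow-up picture to a pointwise bound on $|E(x)|$ by applying the $\varepsilon$-regularity theorems of David (for the line and propeller types) and of David--L\'eger (for the crack-tip type). In the line case, $K$ is locally a $C^1$-arc through $x$ and $u$ admits continuous traces from each side, so $|E(x)| \leq 2$. In the propeller case, $K$ is locally a $C^1$-triple-junction at $x$ and $u$ admits continuous traces in each of the three sectors, so $|E(x)| \leq 3$. In the crack-tip case, David--L\'eger provide a quantitative $L^\infty$-approximation of $u$ by a rotated/translated crack-tip profile at every small scale, and since any such profile has a single limit at its tip, we obtain $|E(x)| = 1$. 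Combined with the harmonic case, this yields $\alpha(x) \leq 3 = N+1$ in dimension $N=2$.

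The main obstacle will be the crack-tip case. Unlike the line and propeller, where $C^1$-regularity of $K$ immediately yields boundary regularity of the harmonic function $u$ on each side of its singular set, the crack-tip requires uniform closeness of $u$ to some rotated/translated crack-tip profile at every small scale, which in turn rests on uniqueness of the tangent half-line at the tip and on a quantitative decay of the non-crack-tip part of the energy. These are exactly the outputs of the Bonnet--David--L\'eger machinery that the introduction advertises as the technical engine of this note.
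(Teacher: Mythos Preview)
Your approach has a genuine gap at its first step. Bonnet's classification of global Mumford--Shah minimizers in the plane applies only under the additional hypothesis that $K_0$ is \emph{connected}; the classification of all global minimizers in $\R^2$ without this assumption is still an open problem. Blow-up limits are global minimizers, but there is no known reason why their singular set should be connected, so you cannot conclude that every blow-up is one of the four models you list. For the same reason, the density coming from Bonnet's monotonicity formula is not known to be quantized into the four values you propose, so your density argument neither rules out exotic blow-ups nor assigns a well-defined type to every $x\in K$. Without a type at $x$, none of the $\varepsilon$-regularity theorems you invoke can be triggered, and the crack-tip case in particular never gets off the ground.

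The paper sidesteps this entirely. Rather than classifying the blow-up, it (i) uses the existence of escaping paths (a local-John condition from Bonnet--David) to show that the number of components of $B(x_0,r)\setminus K$ whose closure contains $x_0$ is preserved under blow-up, and then (ii) invokes only the David--L\'eger theorem---if a global minimizer has $\R^2\setminus K_\infty$ disconnected then $K_\infty$ is a line or a propeller---to cap that number at $3$. The bound $\alpha(x)\le 3$ follows from a H\"older estimate for $u$ within each such component near $x_0$, obtained by integrating the pointwise bound $|\nabla u(x)|\le C\,\mathrm{dist}(x,K)^{-1/2}$ along escaping paths. No $\varepsilon$-regularity, no full classification of blow-ups, and in particular no separate crack-tip analysis is needed.
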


Of course outside $K$ the function $u$ is continuous so that  $\alpha(x)=1$. The question is therefore relevant only for points $x\in K$. In other words, the conjecture says that $u$ must admit a finite number of traces up to the singular set. Moreover, if the Mumford-Shah conjecture is true in dimension 2, then this number must be at most $3$, 
which occurs only at a triple point. This explains the conjectured bound  $\alpha(x)\leq N+1$.

In this note, we prove that the conjecture holds in dimension 2. Here is our main result.  




\begin{theorem}\label{main} If $N=2$, then Conjecture~\ref{conjecture} is true.
\end{theorem}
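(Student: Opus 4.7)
The plan is to reduce Conjecture~\ref{conjecture} to a case analysis on the local structure of $K$ at $x$, exploiting the $\varepsilon$-regularity theory of David and L\'eger together with Bonnet's monotonicity formula. If $x\notin K$ then $u$ is continuous at $x$ and $\alpha(x)=1$, so we may assume $x\in K$. The key point is that in dimension $N=2$, every point of $K$ is asymptotically of one of four model types: an interior point, a regular (flat) point of $K$, a triple junction (three arcs meeting at $120^\circ$), or a cracktip (endpoint of an arc with $\sqrt{r}\sin(\theta/2)$ singularity).

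First, I would invoke Bonnet's quasi-monotonicity formula, which yields at every $x \in K$ the existence of a normalized density limit
$$\omega(x,0^+) = \lim_{r\to 0^+} \frac{1}{r}\left(\int_{B(x,r)\setminus K} |\nabla u|^2\,dy + \mathcal{H}^1(K\cap B(x,r))\right).$$
Combined with Bonnet's classification of 2D global minimizers (together with the connectedness of blow-up limits of $K$ in dimension $2$ established by G.~David), $\omega(x,0^+)$ takes only four discrete values, one for each of the four models above.

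Second, I would apply the $\varepsilon$-regularity theorems of David and L\'eger: if at some small scale $(u,K)$ is sufficiently close to a given model, then it inherits the fine structure of that model near $x$. In the regular-point case, $K$ is a $C^{1,\alpha}$ arc near $x$ and $u$ extends $C^{1,\alpha}$ up to $K$ from each side, giving two one-sided traces, so $\alpha(x)=2$. In the triple-junction case, $K$ is a union of three $C^{1,\alpha}$ arcs meeting at $120^\circ$ and $u$ has three one-sided traces, one per sector, so $\alpha(x)=3$. In the cracktip case, David's decay estimate yields an oscillation bound $\sup_{B(x,r)} u - \inf_{B(x,r)} u = O(\sqrt{r})$, so $u$ has a single limit at $x$ and $\alpha(x)=1$. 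In all cases $\alpha(x)\leq 3 = N+1$.

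The most delicate step will be the cracktip case. The blow-up of $u$ at a cracktip is the function $C\sqrt{r}\sin(\theta/2)$, which vanishes at the origin; but this only constrains the rescaled function $u_r(y) = u(x+ry)/\sqrt{r}$, not $u$ itself. To conclude that $u$ admits a unique limit value at $x$, one needs quantitative control on the rate of convergence, provided by the decay portion of David's $\varepsilon$-regularity theorem near cracktips. Without this, the blow-up information alone would leave open wild oscillations of $u$ compatible with an infinite set $E(x)$.
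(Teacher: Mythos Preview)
Your strategy has a genuine gap at its starting point. You assert that every blow-up of $(u,K)$ at a point of $K$ is one of the four model global minimizers, justifying this by ``Bonnet's classification together with the connectedness of blow-up limits of $K$ in dimension $2$ established by G.~David.'' But the connectedness of $K$ for an arbitrary 2D global minimizer (equivalently, for an arbitrary blow-up limit) is \emph{not} established; proving it would essentially resolve the Mumford--Shah conjecture in the plane, since Bonnet's classification plus the $\varepsilon$-regularity theorems you cite would then give the full local description of $K$ at every point. What David and L\'eger actually proved (Theorem~\ref{davLe} here) is the weaker statement that a global minimizer whose \emph{complement} $\R^2\setminus K$ is disconnected must be a line or a propeller. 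When the complement is connected, the classification is open: one cannot currently rule out exotic global minimizers that are neither the empty set nor a cracktip. For the same reason, the claim that the density $\omega(x,0^+)$ exists and takes one of four discrete values at \emph{every} $x\in K$ is not available; Bonnet's quasi-monotonicity is proved under a connectedness hypothesis on $K$, and without the full classification the range of possible densities is not known to be discrete. Your case analysis therefore does not cover all points of $K$.

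The paper's proof sidesteps this obstruction entirely by never attempting a pointwise blow-up classification. It combines three softer ingredients: (i) at most three connected components of $B(x_0,r)\setminus K$ have $x_0$ in their closure (Proposition~\ref{prop3}, proved by a blow-up argument that uses only the David--L\'eger disconnected-complement theorem, together with escaping paths to transfer components to the limit); (ii) the pointwise gradient bound $|\nabla u|(x)\lesssim \mathrm{dist}(x,K)^{-1/2}$; and (iii) the existence of escaping paths from every point of $\Omega\setminus K$ (Proposition~\ref{prop1}). Ingredients (ii) and (iii) yield a H\"older-type oscillation bound for $u$ within each component near $x_0$ (Proposition~\ref{prop3bis}), so each of the at most three components contributes at most one accumulation value to $E(x_0)$. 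No knowledge of the fine local geometry of $K$ is required, and in particular the argument does not presuppose anything equivalent to the Mumford--Shah conjecture.
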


Actually, the ingredients we use to  prove   Theorem \ref{main} are mainly drawn from the works of G. David, particularly his book \cite{d}, his paper with A. Bonnet \cite{DavidBonnet}, and another paper with J.C. L\'eger \cite{dleger}. However, the full argument requires assembling several statements, together with a few additional  ones, so that a specific and complete proof is actually not directly available in the literature. The aim of these notes is to fill the gaps and provide the necessary details.

Let us explain the scheme of proof for Theorem \ref{main}, along with the plan of the present notes.
In \cite{DavidBonnet}, it is proven that for a global Mumford-Shah minimizer $(u,K)$ in the plane, the connected components of $\R^2 \setminus K$ are John domains with center at infinity. A local version of this result is also available in G. David's book \cite{d} (see Proposition \ref{prop1} below), which provides the existence of ``escaping paths'' in $\Omega \setminus K$ for a Mumford-Shah minimizer. 
This notion, sometimes called local-John condition, is akin to, but weaker than, saying that the components of $\Omega \setminus K$ are John domains.

 
The existence of escaping paths has a key implication for Mumford-Shah minimizers : $u$ satisfies Hölder estimates up to the singular set $K$. Combined with the local finiteness of components (see Proposition \ref{prop2}), this already proves that $\alpha(x)$ is bounded by a universal constant.
But using another powerful result from G. David and J.-C. L\'eger \cite{dleger}, we can go further and actually conclude that $\alpha(x)\leq 3$. Indeed, assuming for contradiction that  $B(x_0,r)\setminus K$ contains more than 3 connected components, one can proceed to a blow up limit and thanks to the local John-domain condition, the number of connected components is preserved in the limit (see Proposition \ref{prop3}). The main result in \cite{dleger} then directly implies that at most 3 components are allowed, and so follows the conclusion. This achieves the plan of proof for   Theorem \ref{main}.

As a byproduct of the proof, we will show the following interesting statement (which is   given by Proposition \ref{prop3}).

\begin{theorem}\label{main1}
    Let $N=2$ and let $(u,K)$ be a Mumford-Shah minimizer in $\Omega$.
    Then for all $x_0\in K$ and $r > 0$ such that $B(x_0,r) \subset \Omega$, there are at most $3$ connected components of $B(x_0,r) \setminus K$ whose closure contains $x_0$.
\end{theorem}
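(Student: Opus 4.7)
\medskip
\noindent\textbf{Proof proposal for Theorem \ref{main1}.}

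The plan is to argue by contradiction: assume that there exist $n \geq 4$ distinct connected components $U_1,\dots,U_n$ of $B(x_0,r)\setminus K$ with $x_0\in\overline{U_i}$ for every $i$, and derive a global Mumford-Shah minimizer in $\R^2$ whose complement has at least four connected components clustering at the origin. Such a global minimizer is ruled out by the classification results of David and L\'eger \cite{dleger}, which (together with the standard list of conical global minimizers) allow at most three components meeting at a common point. The heart of the proof is therefore to show that the contradiction passes through a blow-up: I would construct a rescaled sequence and prove that the four components persist into the limit.

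I would first perform the blow-up. Choose a sequence $r_k\to 0$ and set $K_k := r_k^{-1}(K-x_0)$, $u_k(y) := r_k^{-1/2}\bigl(u(x_0+r_ky)-c_k\bigr)$ with suitable renormalizing constants $c_k$; standard compactness results for Mumford-Shah minimizers (see \cite{d}) yield, after extraction, a global minimizer $(u_\infty,K_\infty)$ in $\R^2$ with $K_k\to K_\infty$ in local Hausdorff distance and $\nabla u_k \to \nabla u_\infty$ strongly in $L^2_{\mathrm{loc}}(\R^2\setminus K_\infty)$. The key step is then to exploit Proposition \ref{prop1}: for each $i$, the escaping-path property produces a curve $\gamma_i:[0,1]\to \overline{U_i}$ with $\gamma_i(0)=x_0$, $|\gamma_i(1)-x_0|\sim r$, and a \emph{carrot} condition of the form $\mathrm{dist}(\gamma_i(t),K)\geq c\,|\gamma_i(t)-x_0|$ for some universal $c>0$. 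After rescaling, each $\gamma_i$ becomes an escape curve $\widetilde\gamma_i^k$ in $r_k^{-1}(U_i-x_0)$, starting at $0$, reaching $\partial B(0,r/r_k)$, and keeping the same fatness ratio $c$ with respect to $K_k$.

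Next, I would pass these escape paths to the limit. Reparametrizing by arc-length and using Arzel\`a--Ascoli on each compact ball, we can extract limiting paths $\widetilde\gamma_i^\infty\subset \R^2$ going from $0$ to infinity, and the Hausdorff convergence $K_k\to K_\infty$ combined with the carrot inequality guarantees $\mathrm{dist}(\widetilde\gamma_i^\infty(t),K_\infty)\geq c\,|\widetilde\gamma_i^\infty(t)|$; in particular each $\widetilde\gamma_i^\infty$ lies in $\R^2\setminus K_\infty$ and has $0$ in its closure. Two such limit paths cannot land in the same component of $\R^2\setminus K_\infty$: if they did, one could join them by a path $\sigma$ in $\R^2\setminus K_\infty$, stay at positive distance from $K_\infty$, and by Hausdorff convergence transport $\sigma$ to a path in $\R^2\setminus K_k$ for large $k$, yielding a connection between $\widetilde\gamma_i^k$ and $\widetilde\gamma_j^k$ in the rescaled picture and hence a connection between $U_i$ and $U_j$ in $B(x_0,r)\setminus K$, contradicting $U_i\neq U_j$. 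Thus $\R^2\setminus K_\infty$ contains at least $n\geq 4$ connected components whose closures contain the origin.

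Finally, I would invoke the David--L\'eger classification: any global Mumford-Shah minimizer in the plane is (up to rigid motion) the empty set, a line, a half-line (crack-tip), or a propeller with three branches meeting at $120^\circ$. None of these has four or more components of the complement clustering at a single point, giving the desired contradiction. The main obstacle, as foreshadowed above, is the preservation of components across the blow-up; this is exactly where the local John condition plays its decisive role, since without a uniform carrot estimate the components could pinch off or merge in the Hausdorff limit.
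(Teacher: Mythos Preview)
Your proposal follows essentially the same strategy as the paper: argue by contradiction, blow up at $x_0$, use escaping paths to show that at least four components survive in the limiting global minimizer, and then invoke David--L\'eger. The one technical point to watch is that Proposition~\ref{prop1} is stated for starting points $x\in\Omega\setminus K$, not for $x_0\in K$; it does not literally hand you a curve $\gamma_i$ with $\gamma_i(0)=x_0$ satisfying a carrot condition. The paper avoids this issue by working scale by scale: for each blow-up radius $r_j$ it picks $x_n^{(j)}\in A_n\cap B(x_0,r_j)$, applies Proposition~\ref{prop1} with $t_0=r_j$ to obtain an endpoint $y_n^{(j)}$ with $\mathrm{dist}(y_n^{(j)},K)\ge c_0 r_j$, and passes only the \emph{points} $y_n^{(j)}/r_j\in\overline{B}(0,2)$ to the limit. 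Your variant can be repaired (e.g.\ by applying Arzel\`a--Ascoli to the escaping paths from a sequence in $U_i$ approaching $x_0$ to manufacture the desired curve from $x_0$), but the paper's endpoint-only argument is a bit more economical and sidesteps the need to pass whole paths to the limit.
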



To finish the introduction, let us say a few words about open questions. The conjecture in higher dimensions $N\geq 3$ remains widely open. The first issue is that the existence of escaping paths relies on the local finiteness of the components of $\Omega \setminus K$, which is unknown in dimension higher than $2$.
It is actually an open question asked by G. David  at the end of page 368 of his book \cite{d}. Then  David-Léger's result (Theorem \ref{davLe} below)  relies on a monotonicity formula that has, up to now, no analogue in higher dimensions. All of these missing ingredients lead us to believe that the higher-dimensional conjecture seems out of reach for the moment, even for the particular case $N=3$ for which the list of minimal cones is known.
 
\vspace{0.5cm}

{\bf Acknowledgements.}  The main result of this paper was announced  by the authors during  the small one-day-workshop  in the honor of Guy David for his retirement, which took place at the university of Orsay  on 5th november 2024.  The authors  wish to warmly thank Laurent Moonens  for the invitation and for the organization of this nice event. The first intention of the authors was  to  credit Theorem   \ref{main} to Guy David's name, but he answered the following: this is not necessary since    his paper \cite{DavidBonnet}, that is one of the main ingredient to prove Theorem   \ref{main},   already had the aim to solve a conjecture by E. De Giorgi !

\section{Escaping paths and finite number of components}


One of the main ingredient of this section is the existence of escaping paths that was proved  in  \cite[Proposition 16 page 473]{d}. 
This property means that from every point starts an arclength-parametrized path which gets away linearly from $K$.
It is a local version of a similar result that was first proved in \cite{DavidBonnet} for global minimizers.  We refer to \cite{DavidBonnet} for the proof\footnote{To be more precise, in \cite{DavidBonnet} the statement is given for a so called ``topological almost MS-minimiser with gauge $h$''.   Proposition \ref{prop1} follows from applying \cite[Proposition 16 page 473]{d} to a Mumford-Shah minimizer, which is a particular case of topological almost MS-minimiser with gauge $h(r)=2\|g\|^2_{\infty}r$.}.


\begin{proposition}[Proposition 16 page 473 of \cite{d}] \label{prop1} Let $N=2$ and let $(u,K)$ be a Mumford-Shah minimizer. There is  a constant $\tau > 0$ (which only depends on $\norm{g}_{\infty}$) and a universal constant $c_0 >0$ such that the following holds. For all $x\in \Omega \setminus K$ and $t_0\in (0, \min({\rm dist}(x,\partial \Omega),\tau))$, there exists a $1$-Lipschitz mapping $z:[0,t_0]\to \Omega \setminus K$ such that $z(0)=x$ and 
\begin{eqnarray}
    {\rm dist}(z(t),K)\geq c_0 t \ \text{ for } t\in [0,t_0]. \label{lowcurve}
\end{eqnarray}
\end{proposition}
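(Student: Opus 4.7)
My plan is to argue by contradiction and blow-up, reducing the local statement to the analogous property for \emph{global} Mumford-Shah minimizers in $\R^2$. Suppose the conclusion fails for some candidate constant $c_0 > 0$: there exist sequences $x_n \in \Omega \setminus K$ and $t_n \in (0, \min(\mathrm{dist}(x_n, \partial \Omega), \tau))$ such that no $1$-Lipschitz curve $z: [0, t_n] \to \Omega \setminus K$ starting at $x_n$ satisfies \eqref{lowcurve}. We may also arrange that $t_n \to 0$, since at a fixed positive scale the existence of a short escaping segment is immediate from $x_n \notin K$ (just take a straight segment in a small ball around $x_n$).

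Next I would rescale. Set $r_n := t_n$ and define $\widetilde K_n := (K - x_n)/r_n$ together with the correspondingly rescaled function $\widetilde u_n(y) := r_n^{-1/2} u(x_n + r_n y)$. The pair $(\widetilde u_n, \widetilde K_n)$ is a topological almost Mumford-Shah minimizer on $B(0, R_n)$ with $R_n \to \infty$ and gauge $h_n(r) = 2 \|g\|_\infty^2 r_n \, r$, which tends to $0$ uniformly on bounded intervals. By the compactness theorem for almost-minimizers of \cite{d}, along a subsequence $\widetilde K_n$ converges in local Hausdorff distance to $K_\infty$, where $(u_\infty, K_\infty)$ is a global Mumford-Shah minimizer in $\R^2$, and the rescaled base point $0$ stays at bounded distance from $K_\infty$ (otherwise the radial segment already provides an escaping path and we are done).

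It then remains to establish the escape property for global minimizers: starting from any point at unit distance from $K_\infty$, there is a $1$-Lipschitz path $z_\infty: [0, 1] \to \R^2 \setminus K_\infty$ with $z_\infty(0) = 0$ and $\mathrm{dist}(z_\infty(t), K_\infty) \geq c_\infty t$ for some universal slope $c_\infty > 0$. Granting this, I would transfer $z_\infty$ back to $\widetilde K_n$ using the Hausdorff convergence; a small transverse perturbation ensures that the perturbed path avoids $\widetilde K_n$ for all large $n$ while keeping the slope above $c_\infty / 2$. Undoing the rescaling then gives an admissible curve at $x_n$ with slope $c_\infty / 2$, contradicting the failure as soon as $c_0 < c_\infty / 2$.

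The hard part is really the escape property for global minimizers, which is the content of the cited result of \cite{DavidBonnet}. Their argument avoids the (still incomplete) full classification of planar global minimizers by exploiting the planar monotonicity formula to prove directly that each connected component of $\R^2 \setminus K_\infty$ is a John domain with center at infinity, from which escaping paths follow immediately. Thus the genuine geometric input is the global statement, and the local version proved above amounts essentially to the compactness/approximation scheme. The subtle step to verify carefully is the stability of the escape property under Hausdorff convergence: one needs a quantitative tubular neighborhood argument to push $z_\infty$ slightly off $K_\infty$ while respecting the linear lower bound and the $1$-Lipschitz constraint.
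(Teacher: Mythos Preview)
The paper does not prove this proposition itself; it quotes Proposition~16, p.~473 of David's book \cite{d}, whose argument is a \emph{direct iterative construction}. As the authors recall, the key ingredient there is the planar finiteness lemma (Lemma~13 in \cite{d}, essentially Proposition~\ref{prop2}): the component of $B(x,Cd)\setminus K$ containing $x$ always contains a ball of radius comparable to $d={\rm dist}(x,K)$. One jumps from $x$ to the center of that ball, multiplies the distance to $K$ by a fixed factor, and iterates geometrically to build the escaping path. No blow-up and no global minimizer enter.

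Your compactness reduction is a genuinely different route, but it has a real gap. After rescaling by $t_n$, the base point $0$ satisfies ${\rm dist}(0,\widetilde K_n)={\rm dist}(x_n,K)/t_n\in(0,c_0)$, and nothing in your setup excludes a subsequence with ${\rm dist}(x_n,K)/t_n\to 0$, so that $0\in K_\infty$ in the limit. In that case the John property of the components of $\R^2\setminus K_\infty$ gives no escaping curve \emph{starting at} $0$, and the transfer step fails on the initial segment: Hausdorff closeness only yields ${\rm dist}(z_\infty(t),\widetilde K_n)\ge c_\infty t-\varepsilon_n$, which is vacuous for $t\lesssim \varepsilon_n$. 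Producing the missing piece on $[0,\varepsilon_n]$ is precisely the local escaping-path problem at a smaller scale, so the reduction is circular there; no ``tubular neighborhood'' perturbation helps, since the obstruction is not that $z_\infty$ grazes $K_\infty$ in its interior but that it must start on $K_\infty$. If instead you rescale by $d_n={\rm dist}(x_n,K)$ to force $0\notin K_\infty$, the path you need has rescaled length $t_n/d_n$, which is unbounded, and local Hausdorff convergence no longer suffices to transfer it back. The direct proof in \cite{d} sidesteps all of this by establishing the doubling step at every fixed scale via a purely planar argument, with no limit taken.
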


    In the sequel, we restrict the name ``escaping path'' to the paths that satisfy \eqref{lowcurve} with the precise constant $c_0$. In contrast to John domains, a component of $\Omega \setminus K$ may not have a common center where all points are connected to by an escaping path. From a same point $x \in \Omega \setminus K$ can starts several escaping paths which connects $x$ to different centers further away from $K$, but there may be no good quantitative connection (satisfying (\ref{lowcurve}) with a constant independent of $x$ and $K$) between one center and another.


A key ingredient in the proof of Proposition \ref{prop1} is the local finiteness of the components of $\Omega \setminus K$. We recall the precise statement below. The argument  appears in the proof of \cite[Lemma 13 page 472]{d}, but it applies only in the planar setting, which is the reason why Proposition \ref{prop1} is also restricted to the dimension 2.



\begin{proposition}\label{prop2}
    Let $N=2$, let  $(u,K)$ be a Mumford-Shah minimizer in $\Omega$. There exists a radius $r_0 > 0$ (which only depends on $\norm{g}_{\infty}$) and a universal constant $C \geq 1$ such that for all $x_0 \in K$, and $r \leq r_0$ with $B(x_0,2r) \subset \Omega$, there are at most $C$ components of $B(x_0,2r) \setminus K$ which meet $B(x_0,r)$.
\end{proposition}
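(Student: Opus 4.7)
The plan is to combine the Ahlfors upper regularity of $K$ with a coarea argument on concentric circles centered at $x_0$.

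I would begin with the classical Ahlfors upper bound for Mumford-Shah minimizers: there exist $r_0 > 0$ and $C_1 \geq 1$, depending only on $\|g\|_\infty$, such that $\mathcal{H}^1(K \cap B(x, s)) \leq C_1 s$ for every $x \in K$ and $s \leq r_0$ with $B(x, s) \subset \Omega$. Next I would apply Eilenberg's coarea inequality to the $1$-Lipschitz function $y \mapsto |y - x_0|$ restricted to the rectifiable set $K \cap B(x_0, 2r)$:
\[
\int_r^{2r} \mathcal{H}^0(K \cap \partial B(x_0, s))\, ds \leq \mathcal{H}^1(K \cap B(x_0, 2r)) \leq 2 C_1 r.
\]
This produces a ``good'' radius $s^* \in (r, 2r)$ with $\mathcal{H}^0(K \cap \partial B(x_0, s^*)) \leq 2 C_1$. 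The points of $K \cap \partial B(x_0, s^*)$ partition the circle into at most $M := \max(2 C_1, 1)$ open arcs; each such arc is connected and disjoint from $K$, hence lies inside a single component of $B(x_0, 2r) \setminus K$. Therefore at most $M$ components of $B(x_0, 2r) \setminus K$ can meet $\partial B(x_0, s^*)$.

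The remaining task is to control the ``enclosed'' components $U$ that meet $B(x_0, r)$ but are entirely contained in $B(x_0, s^*) \subset B(x_0, 2r)$. For such a $U$, the relative boundary $\partial U$ lies in $K \cap \overline{B(x_0, s^*)}$ and is a closed subset of $\Omega$ separating $U$ from the rest of $B(x_0, 2r)$. My plan is to establish a uniform lower bound $\mathcal{H}^1(\partial U) \geq c r$ by a competitor argument: remove the bulk of $\partial U$ from $K$ and modify $u$ on $U$ (and a thin tubular neighborhood of $\partial U$) so that the new function is admissibly $H^1$ with trace matching the exterior component across the removed set; for instance, take $u'$ on $U$ to be the harmonic extension of the exterior trace of $u$ along $\partial U$. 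Using $\|u\|_\infty \leq \|g\|_\infty$ together with Poincar\'e/Caccioppoli-type estimates, one bounds the added Dirichlet and $L^2$ cost by $o(r)$ as $r \to 0$, while the gain in length equals $\mathcal{H}^1(\partial U)$. Minimality of $(u, K)$ then forces $\mathcal{H}^1(\partial U) \geq c r$ once $r \leq r_0$ is sufficiently small. Since $\mathcal{H}^1$-a.e.\ point of $K$ lies on the boundary of at most two components, summing over enclosed components yields their number $\leq 2 \mathcal{H}^1(K \cap B(x_0, 2r))/(cr) \leq 4 C_1/c$.

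Combining the two bounds produces a universal constant $C$ dominating the number of components of $B(x_0, 2r) \setminus K$ meeting $B(x_0, r)$. The hard part will be the competitor construction for the enclosed components: one must carefully redefine $u$ in a thin neighborhood of $\partial U$ so that the resulting competitor is genuinely in $H^1$ across the removed set, and then show that the added Dirichlet and $L^2$ cost is strictly dominated by the length saved at small scales. This is exactly the kind of delicate comparison already carried out in Lemma 13 of \cite{d}, from which I would import the precise estimate with minor adaptations.
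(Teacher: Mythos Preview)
Your coarea/good-circle step bounding the components that cross $\partial B(x_0, s^*)$ is fine. The gap is in the enclosed components. The competitor you sketch---delete $\partial U$ from $K$ and replace $u|_U$ by the harmonic extension of the exterior trace---does not come with the Dirichlet control you claim. The harmonic extension minimizes energy only among functions with the \emph{same} boundary data, whereas the original $u|_U$ carries the \emph{interior} trace on $\partial U$, so no comparison is available; and nothing bounds the $H^{1/2}$-seminorm of the exterior trace along the possibly very irregular set $\partial U$. Your assertion that the added Dirichlet cost is $o(r)$ is therefore unsupported, and I do not see how to repair it short of already knowing a uniform lower volume bound on $U$. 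The closing appeal to Lemma~13 of \cite{d} is also off target: that lemma does not produce a length lower bound on $\partial U$; what it proves is that each component of $\Omega\setminus K$ meeting $B(x_0,r)$ contains a \emph{disk} of radius $r/C_1$.

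That disk property is exactly what the paper invokes, and it collapses the whole argument to one line: every component of $B(x_0,2r)\setminus K$ meeting $B(x_0,r)$ has area at least $\pi r^2/C_1^2$, so by disjointness inside $B(x_0,2r)$ there are at most $4C_1^2$ of them---no good circle, no crossing/enclosed dichotomy, no boundary-length estimate. Equivalently one can use Proposition~\ref{prop1}: from any $x\in B(x_0,r)\setminus K$ the escaping path of length $r$ stays in $B(x_0,2r)\setminus K$ and ends at a point $z$ with $\mathrm{dist}(z,K)\geq c_0 r$, so the component of $x$ contains a definite disk around $z$.
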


\begin{proof} We justify Proposition \ref{prop2} by application of \cite[Lemma 13 page 472]{d}.
    Let $x\in B(x_0,r)\setminus K$ and let $\Omega_0$ be the connected component of $\Omega \setminus K$ that contains $x$. Then a direct application of \cite[Lemma 13 page 472]{d}  says that there exists a universal constant $C_1>0$ such that  $\Omega_0\cap B(x,r/2)$ contains a disk of radius $r/C_1$.
    We deduce that  every component of $B(x_0,2r) \setminus K$ which meets $B(x_0,r)$ has a volume at least $\pi r^2 C_1^{-2}$. 
    Since they are disjoint, there can only be a finite number of such components, bounded by a constant which depends only on $C_1$.

 An alternative argument, making our paper perhaps more self-contained, would be to remark that Proposition \ref{prop1} is actually a stronger statement than the local finiteness. Indeed, by the existence of escaping paths (i.e., Proposition \ref{prop1}), we know that every $x\in B(x_0,r)\setminus K$ is connected to a point $z$ in $B(x_0,2r)$ satisfying ${\rm dist}(z,K)\geq c_0 r$, and then one can conclude as before.
 \end{proof}

\section{At most 3 components in the complement of $K$ locally}

 
We continue our analysis of connected components, by showing that every point $x_0 \in K$ is adherent to at most 3 components.
For that purpose we use a deep result by David and L\'eger contained in \cite{dleger}. 

\begin{theorem}[David-L\'eger \cite{dleger}] \label{davLe} Let $(u,K)$ be a Mumford-Shah global minimizer in the plane such  that $\R^2\setminus K$ is not connected. Then $K$ is a line or a propeller (i.e. three half lines meeting at one point with equal angles), and $u$ is locally constant on $\R^2\setminus K$.
\end{theorem}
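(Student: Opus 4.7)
The plan is to follow the strategy of David and L\'eger \cite{dleger}, whose architecture has two stages: first, exploit the disconnection hypothesis together with Bonnet's monotonicity formula to force $u$ to be locally constant; second, classify $K$ among planar sets that minimize length under a topological constraint on the complement.

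The starting point is Bonnet's monotonicity formula for global minimizers in the plane: for every $x_0 \in \R^2$, the normalized Dirichlet energy
\[
\omega(x_0,r) := \frac{1}{r}\int_{B(x_0,r)\setminus K} |\nabla u|^2 \, dx
\]
is non-decreasing in $r$. The goal of the first stage is to prove that $\omega(x_0,r) \equiv 0$. The disconnection assumption is the key extra hypothesis that enables this. Concretely, if $\R^2 \setminus K$ has two components $U_1, U_2$, one can build competitors by (i) replacing $u$ on one component by a suitable constant on a large annulus $B(x_0,2R)\setminus B(x_0,R)$, (ii) reconnecting $u$ through a thin strip added to $K$ whose length is $O(R)$ but which is compensated by the Dirichlet energy saved, and (iii) using minimality of $(u,K)$ to derive a differential inequality for $\omega$. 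Combined with the monotonicity, this differential inequality forces $\omega \to 0$ as $r \to \infty$, and hence $\omega \equiv 0$ by monotonicity. Therefore $\nabla u = 0$ on $\R^2\setminus K$ and $u$ is locally constant.

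Once $u$ is locally constant, minimality of $(u,K)$ reduces to a purely geometric statement: $K$ is \emph{topologically length-minimizing} in the sense that for any ball $B$ and any closed set $K' \subset \R^2$ with $K' = K$ outside $B$ and inducing the same partition of $\R^2\setminus K'$ into connected components as $K$, we have $\HH^1(K\cap B) \leq \HH^1(K'\cap B)$. Such sets admit a blow-up analysis: at every point, any blow-up is a planar minimal cone for the length functional under a topological constraint, and these cones are classified as the empty set, a line through the origin, or a propeller (three half-lines at equal $120^\circ$ angles). Using Ahlfors-regularity of $K$ and the connectedness of the singular set inherited from global minimality, one promotes this local structure to a global one; the disconnection assumption then rules out the empty cone case, leaving only the line (two components) and the propeller (three components) as globally admissible configurations. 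In both cases, $u$ is constant on each component, with the values determined (up to the $L^2$ indeterminacy allowed in the global minimizer definition) by the competitor analysis.

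The main obstacle is the first stage, namely the proof that $\omega \equiv 0$. Bonnet's argument handled the connected-$K$ case via a different topological input; the disconnected case requires a much more delicate competitor construction, because one must decouple the values of $u$ on $U_1$ and $U_2$ without creating large boundary jumps, and the geometric cost of the surgery along $K$ must be balanced precisely against the gain in Dirichlet energy. David and L\'eger introduce an auxiliary monotone quantity, adapted to the separation of $U_1$ and $U_2$, and handle the surgery via careful harmonic-measure and trace estimates in domains that are only qualitatively nice. This is the technical heart of \cite{dleger}, and reproducing it in full would occupy the bulk of the proof; the classification step afterwards is comparatively standard.
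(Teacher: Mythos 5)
The paper does not prove Theorem \ref{davLe}: it is quoted as a black box from \cite{dleger} and used as an external ingredient, so there is no internal proof to compare yours against. Judged on its own terms, your proposal is a road map rather than a proof: the entire first stage --- showing that disconnection of $\R^2\setminus K$ forces $u$ to be locally constant --- is described only as ``build competitors, derive a differential inequality, conclude $\omega\equiv 0$'', and you explicitly defer the estimates (``reproducing it in full would occupy the bulk of the proof''). Since that stage \emph{is} the content of \cite{dleger}, the technical heart is missing.

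There is also a concrete error in the one analytic claim you do commit to. Bonnet's monotonicity of $\omega(x_0,r)=r^{-1}\int_{B(x_0,r)\setminus K}|\nabla u|^2$ is not known for arbitrary global minimizers and arbitrary centers: the proof of $\omega'\geq 0$ rests on a Wirtinger inequality on $\partial B(x_0,r)\setminus K$ with a single constant per connected component of $B(x_0,r)\setminus K$, and this requires topological control on how those components meet the circle (Bonnet obtains it under a connectedness assumption on $K$). Obtaining a usable monotone quantity in the disconnected setting is precisely the contribution of \cite{dleger}, and the quantity involved there is the full normalized energy $r^{-1}\bigl(\int_{B(x_0,r)}|\nabla u|^2+\HH^1(K\cap B(x_0,r))\bigr)$ rather than the Dirichlet part alone. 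Taking unconditional monotonicity of $\omega$ as your ``starting point'' therefore assumes something essentially as deep as the theorem you are trying to prove. The second stage (classification of topologically length-minimizing planar sets as lines and propellers) is a fair summary but equally unproved here. For the purposes of the present paper, the correct move is the one the authors make: cite \cite{dleger} and do not reprove it.
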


We prove the following proposition that has been already announced in the introduction (Theorem \ref{main1}), that we restate it here again.

\begin{proposition}\label{prop3}   Let $N=2$ and let  $(u,K)$ be a Mumford-Shah minimizer in $\Omega$.
    Then for all $x_0\in K$ and $r > 0$ such that $B(x_0,r) \subset \Omega$, there are at most $3$ connected components of $B(x_0,r) \setminus K$ whose closure contains $x_0$.
\end{proposition}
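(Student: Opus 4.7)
The plan is to argue by contradiction via a blow-up at $x_0$, combined with Theorem~\ref{davLe}. Assume that $B(x_0,r) \setminus K$ has four distinct connected components $\Omega_1,\ldots,\Omega_4$ each containing $x_0$ in its closure. For each $i$ I pick $x_i^n \in \Omega_i$ with $x_i^n \to x_0$ and set $\rho_n := \max_i \lvert x_i^n - x_0 \rvert \to 0$. Define the rescaled pair $(u_n, K_n)$ by $K_n := \rho_n^{-1}(K - x_0)$ and $u_n(y) := u(x_0 + \rho_n y)$. By the standard compactness theory for Mumford--Shah minimizers under such rescalings (see \cite{d}), and since the fidelity term scales away as $\rho_n \to 0$, we may extract a subsequence along which $K_n$ converges to a closed set $K_\infty$ locally in Hausdorff distance, with $K_\infty$ the singular set of a global Mumford--Shah minimizer in $\R^2$.

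Set $y_i^n := \rho_n^{-1}(x_i^n - x_0) \in \overline{B(0,1)}$. Applying Proposition~\ref{prop1} at the interior point $x_i^n$, for $n$ large we obtain an escaping path $z_i^n : [0,t_0] \to \Omega \setminus K$ starting at $x_i^n$, with $\mathrm{dist}(z_i^n(t),K) \geq c_0 t$ and $t_0$ bounded below by a positive constant depending only on $\mathrm{dist}(x_0,\partial\Omega)$ and $\tau$. Rescaling, the $1$-Lipschitz paths $\tilde z_i^n(s) := \rho_n^{-1}(z_i^n(\rho_n s) - x_0)$ are defined on any prescribed interval $[0,L]$ for $n$ large, start at $y_i^n$, take values in $\R^2 \setminus K_n$, and satisfy $\mathrm{dist}(\tilde z_i^n(s), K_n) \geq c_0 s$. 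Passing to a further subsequence by Arzelà--Ascoli and using the Hausdorff convergence of $K_n$, each $\tilde z_i^n$ converges uniformly to a $1$-Lipschitz path $\tilde z_i^\infty$ with $\mathrm{dist}(\tilde z_i^\infty(s), K_\infty) \geq c_0 s$; in particular the endpoint $p_i := \tilde z_i^\infty(L)$ sits at distance at least $c_0 L$ from $K_\infty$.

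The crux is to show that $p_1, \ldots, p_4$ lie in four distinct connected components of $\R^2 \setminus K_\infty$. Suppose for contradiction that $p_1$ and $p_2$ belong to a common component. Then there is a continuous curve $\gamma \subset \R^2 \setminus K_\infty$ joining them; being compact and disjoint from $K_\infty$, $\gamma$ stays at positive distance from $K_n$ for $n$ large by local Hausdorff convergence. Concatenating this curve, perturbed slightly near its endpoints to reach $\tilde z_1^n(L)$ and $\tilde z_2^n(L)$, with $\tilde z_1^n$ and the reverse of $\tilde z_2^n$, I obtain a path in $\R^2 \setminus K_n$ from $y_1^n$ to $y_2^n$ entirely contained in a fixed ball $B(0,R)$. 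Rescaling back produces a path in $\Omega \setminus K$ joining $x_1^n$ and $x_2^n$ and contained in $B(x_0, R\rho_n) \subset B(x_0, r)$ for $n$ large, contradicting the fact that $\Omega_1$ and $\Omega_2$ are distinct components of $B(x_0,r) \setminus K$. Hence $\R^2 \setminus K_\infty$ has at least four components, contradicting Theorem~\ref{davLe}. The main obstacle is precisely this separation step: one must ensure that four components at small scale give rise to four components in the blow-up, and it is the quantitative escape rate of Proposition~\ref{prop1} that keeps the witness endpoints uniformly away from $K_\infty$ and allows the Hausdorff convergence argument to close.
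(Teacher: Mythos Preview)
Your argument is correct and follows essentially the same route as the paper: a blow-up at $x_0$, the use of Proposition~\ref{prop1} to produce witness points that remain uniformly far from $K_\infty$, a Hausdorff-convergence argument to show that distinct components persist in the limit, and finally a contradiction with Theorem~\ref{davLe}. The only cosmetic difference is that you pass the full escaping paths to the limit via Arzel\`a--Ascoli and concatenate with them directly, whereas the paper tracks only the endpoints $y_n/r_j$ and joins them to the limit points $z_n$ by short segments; the two variants are equivalent in substance.
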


\begin{proof}  We can assume that $x_0$ is the origin.
    We let   $c_0>0$ be the constant of Proposition~\ref{prop1}  and $r_0 > 0$ be a radius such that $B(x_0,r_0) \subset \Omega$.
    Let us denote by $\{A_n\}$  the connected components of $B(0,r_0) \setminus K$ whose boundaries contain $0$ and assume by contradiction that  $\sharp \{A_n\}\geq 4$. Consider a blow-up sequence $(u_j, K_j)$ of $(u,K)$ at the origin, namely $K_j=r_j^{-1}K$ and $u_j(x)= r_j^{-1/2}u(r_j x)$ for a sequence $r_j\to 0$. This blow-up sequence converges, up to a subsequence still denoted by $r_j\to 0$, to a global minimizer $(u_{\infty}, K_{\infty})$ (see \cite[Section 40]{d} for the notion of blow-up limit). Our goal is to prove that $\R^2 \setminus K_{\infty}$ contains at least $4$ different connected components, which would be a contradiction with Theorem \ref{davLe}. 


    Since $0\in \partial A_n$ for all $n$, we know that for all $j$ there exists $x_n \in A_n \cap B(0,r_j)$. By applying Proposition \ref{prop1}, we deduce that this point $x_n$  is connected to a point $y_n$ such that $|x_n-y_n|\leq r_j$ and ${\rm dist}(y_n, K)\geq c_0r_j$. Up to a subsequence, the points $y_n/r_j \in B(0,2)$  converges as $j\to +\infty$ to some points $z_n$ in $\overline{B}(0,2)$. Since ${\rm dist}(y_n, K)\geq c_0r_j$ for all $j$, we deduce that actually $z_n \in \overline{B}(0,2)\setminus K_{\infty}$ for all $n$ and that even ${\rm dist}(z_n, K_{\infty})\geq c_0$ for all $n$. Let us show that: 
\begin{eqnarray}
    \text{the points $\{z_n\}_n$ all belong to different connected components of $\R^2\setminus K_{\infty}$. } \label{amontrer}
\end{eqnarray}

    Assume by contradiction that there is a curve $\gamma$ from $z_n$ to $z_{n'}$ in $\R^2 \setminus K_{\infty}$ (notice that  connected open set is always arc-wise connected). Then by local Hausdorff convergence, $\gamma$ does not touch  any of the sets $K_j$ for $j$ large enough. Also,  by convergence of $y_n/r_j$ to $z_n$, we know that for $j$ large enough   $|y_n/r_j-z_n|\leq c_0/2$. Since moreover ${\rm dist}(z_n, K_{\infty})\geq c_0$, we know that for $j$ large enough, the segments  $[y_n/r_j, z_n] \subset \overline{B}(z_n,c_0/2)$ does not touch $K_{\infty}$ and again by Hausdorff convergence, does not touch $K_j$ neither. Therefore we can modify the curve $\gamma$ as follows: 
$$\gamma'=\gamma \cup [y_n/r_j,z_n] \cup [y_{n'}/r_j,z_{n'}].$$
    This provides a continuous curve from $y_n/r_j$ to $y_{n'}/r_j$ which does not touch $K_j$. In other words the curve $r_j \gamma'$ is a continuous curve from $y_n$ to $y_{n'}$, that does not touch $K$. Since $\gamma'$ is bounded the curve $r_j\gamma'$ belongs to  $B(0,r_0)\setminus K$ for $j$ large enough, which is  a contradiction with the fact that $y_n$ and $y_{n'}$ are in different connected components of  $B(0,r_0)\setminus K$. This achieves the proof of the claim in \eqref{amontrer}.

But then we have shown that $K_{\infty}$ is the singular set associated to a global minimizer in the plane for which $\R^2\setminus K_{\infty}$ has more than $4$ connected components. This is not possible due to Theorem \ref{davLe}, and so follows the Proposition. 
\end{proof}

\begin{remark}
 A related blow-up argument can be found in the recent book \cite{delF}, namely in the proof of Theorem 1.5.2 (ii) (see page 102-103 of \cite{delF}, proof of Corollary 4.4.2). There, the stability of connected components is instead established through the use of $C^1$ estimates and $\varepsilon$-regularity theory.
\end{remark}

\begin{remark} With the same proof as for Proposition \ref{prop3} we could also have proved the following alternative ``global'' version of the statement: if $(u,K)$ is a Mumford-Shah minimizer in $\Omega$ and $x_0\in K$, then the number of connected components of $\Omega \setminus K$ whose closure contain $x_0$, does not exceed 3. 
\end{remark}

 \section{Gradient bound   and H\"older estimates}

Finally, we prove the existence of a trace, up to the boundary, thanks to the existence of escaping paths in the components.   For that purpose we will need an $L^\infty$ estimate on the gradient of $u$. We start with a very well known upper bound.

\begin{proposition}\label{prop333}Let $N=2$ and let $(u,K)$ be a Mumford-Shah minimizer  {in $\Omega$}. Then for all $x\in \Omega$ and $r>0$ such that $B(x,r)\subset \Omega$ we have 
\begin{eqnarray}
    \int_{B(x,r)} |\nabla u|^2 \; dx + \HH^1(K\cap B(x,r)) \leq 2\pi r + 2\pi \|g\|_{\infty}r^2. \label{upperbound}
\end{eqnarray}
\end{proposition}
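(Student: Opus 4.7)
The plan is to derive this energy bound directly from the minimality of $(u,K)$ by constructing a simple competitor obtained by surrounding the ball $B(x,r)$ with its boundary circle. More precisely, I would define $K' := (K \setminus B(x,r)) \cup \partial B(x,r)$ and $v := u$ on $\Omega \setminus \overline{B}(x,r)$, extended by $v \equiv 0$ inside $B(x,r)$. Since $\partial B(x,r) \subset K'$, the function $v$ belongs to $H^1(\Omega \setminus K')$ even though it is discontinuous across $\partial B(x,r)$, and $K' \subset \Omega$ is relatively closed. Hence $(v,K')$ is an admissible competitor.

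Comparing energies, the contribution of $\Omega \setminus \overline{B}(x,r)$ cancels on both sides, and the minimality inequality reduces to
\begin{equation*}
\int_{B(x,r) \setminus K} |\nabla u|^2 \, dx + \int_{B(x,r) \setminus K} |u-g|^2 \, dx + \HH^1(K \cap B(x,r)) \leq \int_{B(x,r)} |g|^2 \, dx + \HH^1(\partial B(x,r)).
\end{equation*}
Here $\HH^1(\partial B(x,r)) = 2\pi r$, and dropping the nonnegative fidelity term on the left and estimating $|g|^2 \leq \|g\|_\infty^2$ (with $\LL^2(B(x,r)) = \pi r^2$) yields the stated bound.

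The only technical point to check is that $K'$ is indeed a valid competing singular set (relatively closed in $\Omega$, with the standard Mumford-Shah convention) and that $v \in H^1(\Omega \setminus K')$; both follow from the fact that $\partial B(x,r)$ is a smooth curve separating $B(x,r)$ from its complement in $\Omega$. There is no substantial obstacle here: the estimate is a direct consequence of plugging in an explicit competitor, and the only mild subtlety is ensuring that $K \setminus B(x,r)$ remains relatively closed in $\Omega$, which holds up to modifying on the $\HH^1$-negligible set $K \cap \partial B(x,r)$ (or equivalently by choosing $r$ so that $\HH^1(K \cap \partial B(x,r)) = 0$, which holds for a.e. $r$, and then passing to the limit by monotonicity to recover the inequality for every $r$).
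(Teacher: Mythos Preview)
Your argument is correct and is exactly the paper's approach: the paper likewise takes the competitor $L=(K\setminus B(x,r))\cup\partial B(x,r)$ with $v\equiv 0$ inside the ball (after first recording $\|u\|_\infty\le\|g\|_\infty$ by truncation) and reads off the inequality. As a side remark, both your computation and the paper's naturally produce a fidelity contribution of order $\|g\|_\infty^{2}\,r^{2}$ rather than $\|g\|_\infty\,r^{2}$, so the constant in the displayed bound appears to be a harmless misprint; your handling of the closedness of $K'$ via a.e.\ $r$ and monotone approximation is a nice extra bit of care that the paper omits.
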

\begin{proof} This is the very standard upper bound on Mumford-Shah  minimizers. First notice that $\|u\|_\infty \leq \|g\|_\infty$ because one can compare $u$ with a truncated version of $u$ (see for instance \cite[Lemma 2.1.1]{delF}).  Assume without loss of generality that $\overline{B}(x,r) \subset \Omega$ and use the competitor $(v,L)$ made by taking $L=K\cup \partial B(x,r) \setminus B(x,r)$ and $v=0$ in $B(x,r)$, $v=u$ in $\Omega \setminus B(x,r)$. This automatically gives \eqref{upperbound}.
\end{proof}

In the sequel we will also need the following gradient estimate on the function $u$. This is the only place where we use that $(u,K)$ is a Mumford-Shah minimizer  (and not only an almost-minimizer).

 \begin{proposition} \label{prop33} 
     Let $N=2$ and let $(u,K)$ be a Mumford-Shah minimizer in $\Omega$. There exists a constant $r_2 > 0$ (which only depends on $\norm{g}_{\infty}$) such that for all $x_0 \in K$, for all $r \leq r_2$ with $B(x_0,2r) \subset \Omega$ and for all $x \in B(x_0,r) \setminus K$, we have
 \begin{eqnarray} 
    |\nabla u|(x) \leq 2 {\rm dist}(x,K)^{-\frac{1}{2}} + 4 (\|g\|_\infty {\rm dist}(x,K))^{\frac{1}{2}}. \label{estimateH}
\end{eqnarray}
\end{proposition}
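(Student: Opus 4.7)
The plan is to combine the interior energy estimate of Proposition \ref{prop333} with the Euler--Lagrange equation that $u$ satisfies on balls disjoint from $K$, via a standard harmonic decomposition.

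Set $d := {\rm dist}(x, K)$. Since $x_0 \in K$ and $x \in B(x_0, r)$, one has $d \leq r \leq r_2$, and $B(x, d) \subset B(x_0, 2r) \subset \Omega$ with $K \cap B(x, d) = \emptyset$. The minimality of $(u, K)$ against perturbations supported in $B(x, d)$ that leave $K$ untouched yields the weak Euler--Lagrange equation $\Delta u = u - g$ in $B(x, d)$, which is classical by interior elliptic regularity since $u, g \in L^\infty$. Proposition \ref{prop333} also gives $\|u\|_\infty \leq \|g\|_\infty$ together with the energy bound
\begin{equation*}
\int_{B(x, d)} |\nabla u|^2 \,dy \leq 2\pi d + 2\pi \|g\|_\infty d^2.
\end{equation*}
This is the only place where the minimality (and not merely almost-minimality) is used, as noted just before the statement.

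To promote this integral control to a pointwise bound at $x$, I would split $u = h + w$ on $B(x, d)$, where $h$ is the harmonic extension of $u|_{\partial B(x, d)}$ and $w := u - h$ solves $\Delta w = u - g$ with $w = 0$ on $\partial B(x, d)$. For the harmonic piece, the mean value identity applied to each harmonic function $\partial_i h$ combined with Jensen's inequality gives $|\nabla h(x)|^2 \leq \fint_{B(x, d)} |\nabla h|^2$, and Dirichlet minimality of $h$ on $B(x, d)$ yields $\int |\nabla h|^2 \leq \int |\nabla u|^2$, so that $|\nabla h(x)|^2 \leq 2/d + 2\|g\|_\infty$. For the corrector $w$, the explicit 2D Green's function of the disc together with $\|u - g\|_\infty \leq 2\|g\|_\infty$ yields a pointwise bound of the form $|\nabla w(x)| \leq C d\|g\|_\infty$ for a universal constant $C$.

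Combining via the triangle inequality and the elementary $\sqrt{a+b} \leq \sqrt{a} + \sqrt{b}$, and choosing $r_2 > 0$ sufficiently small in terms of $\|g\|_\infty$ (so that $d\|g\|_\infty \leq 1$), the lower-order terms $\|g\|_\infty^{1/2}$ and $d\|g\|_\infty$ can be absorbed into $(\|g\|_\infty d)^{1/2}$, matching the form of \eqref{estimateH}. The main obstacle I anticipate is pinning down the sharp numerical constants $2$ and $4$: the decomposition as sketched yields $\sqrt{2}\,d^{-1/2}$ rather than the asserted $2\,d^{-1/2}$, so recovering the stated constants likely requires either a slightly sharper splitting (for instance decomposing on a sub-ball with a more careful correction estimate) or a direct representation formula. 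The overall scheme of Euler--Lagrange plus harmonic replacement, fueled by the energy bound from Proposition \ref{prop333}, is nevertheless robust and delivers the bound up to universal constants.
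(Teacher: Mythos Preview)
Your approach is correct and delivers the estimate, but it differs from the paper's argument. The paper does not split $u=h+w$; instead it works directly with the equation $-\Delta(\partial_i u)=\partial_i(u-g)$ (first for smooth $g$, then by mollification) and derives the exact representation
\[
\nabla u(0)=\frac{1}{\pi r^2}\int_{B(0,r)}\nabla u\,dy+\int_0^r\frac{1}{2\pi t}\int_{\partial B(0,t)}(u-g)\frac{y}{t}\,dy\,dt,
\]
which immediately yields $|\nabla u(0)|\leq \tfrac{1}{\pi r^2}\int_{B(0,r)}|\nabla u|+\|u-g\|_\infty r$. One then applies Cauchy--Schwarz and the energy bound from Proposition~\ref{prop333}, exactly as you do after your decomposition. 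The two routes are close cousins: your harmonic replacement separates the ``harmonic'' and ``corrector'' contributions at the level of $u$, while the paper's formula does the same separation at the level of $\nabla u$ in a single line, avoiding the Green's function estimate for $w$. Both buy the same thing; the paper's version is a bit more self-contained.

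Your worry about the constants is misplaced. You obtain $\sqrt{2}\,d^{-1/2}$ for the leading term, which is \emph{smaller} than the stated $2\,d^{-1/2}$; since \eqref{estimateH} is an upper bound, your estimate is simply sharper and implies the proposition as written. The paper in fact also arrives at $(2/r)^{1/2}$ and then relaxes it to $2r^{-1/2}$ in the final line. So there is no obstacle here at all, and no need for a ``sharper splitting'' or sub-ball argument.
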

 
\begin{proof} The proposition follows from the fact that  $u$ is the solution of the equation 
$$-\Delta u = u-g  \text{  in } \Omega \setminus K$$
 with $u-g \in L^\infty$. In particular, $u$ is of class $C^{1,\alpha}$ by elliptic regularity.
 
  In order to derive some more precise estimates it will be convenient to  assume  for a moment that  $g$ is also of class $C^1$. In this case we can  use the equation to get
 $$-\Delta (\partial_i u) = \partial_i (u-g).$$
Let $x\in \Omega \setminus K$. To get the desired gradient estimate we will use a variant of the mean value property, applied to $\partial_i u$.

Assuming without loosing generality  that $x=0$, we use  a standard computation in polar coordinates (see \cite[page 26]{evans}) which yields, for all $r > 0$ such that $\overline{B}(0,r) \subset \Omega \setminus K$,
 \begin{eqnarray}
 \frac{d}{dr} \left( \frac{1}{\pi r^2}\int_{B(0,r)} \partial_i u \;dy\right)&= &\frac{1}{2\pi r} \int_{B(0,r)} \Delta (\partial_i u)\;dy \notag \\
 &=& \frac{1}{2\pi r} \int_{B(0,r)} \partial_i (g-u)\;dy \notag\\
 &=&  \frac{1}{2\pi r} \int_{\partial B(0,r)}  (g-u)  \frac{y_i}{r}\;dy. \notag
 \end{eqnarray}

 We deduce that for all $s<r$,
 $$ \frac{1}{\pi r^2}\int_{B(0,r)} \nabla u \;dy - \frac{1}{\pi s^2}\int_{B(0,s)} \nabla u \;dy= \int_{s}^r \left( \frac{1}{2\pi t} \int_{\partial B(0,t)}  (g-u) \frac{y}{t} \;dy \right)\,dt.$$
Since $\nabla u$ is known to be continuous we can pass to the limit $s\to 0$ to get the representation formula
 $$\nabla u(0)= \frac{1}{\pi r^2}\int_{B(0,r)} \nabla u \;dy+\int_{0}^r \frac{1}{2\pi t} \int_{\partial B(0,t)}  (u-g)  \frac{y}{t}\;dy \,dt,$$
 from which we deduce that 
 \begin{eqnarray}
 |\nabla u(0)|\leq  \frac{1}{\pi r^2}\int_{B(0,r)} |\nabla u| \;dy + \|u-g\|_{\infty} r. \label{estimateMate}
 \end{eqnarray}
  Then if $g$ is not $C^1$ we can regularize by a mollifier $\varphi_\varepsilon$ and denoting by $u_\varepsilon:=u * \varphi_\varepsilon$ and $g_\varepsilon:= g *\varphi_\varepsilon$  we observe that 
$$-\Delta u_\varepsilon= u_\varepsilon-g_\varepsilon.$$
 and $\norm{u_{\varepsilon} - g_{\varepsilon}}_{\infty} \leq \norm{u - g}_{\infty}$.
 Applying the above computations to $u_\varepsilon$ we get
 $$|\nabla u_\varepsilon(0)|\leq  \frac{1}{\pi r^2}\int_{B(0,r)} |\nabla u_\varepsilon | \;dy + \|u -g \|_{\infty}r,$$
 and letting  $\varepsilon\to 0$ we recover \eqref{estimateMate} in the general case when $g \in L^\infty$.  Finally, using that $\|u\|_\infty\leq \|g\|_\infty$ and applying on H\"older inequality in \eqref{estimateMate} we arrive to 
\begin{eqnarray}
    |\nabla u (0)|& \leq &  \left(\frac{1}{\pi r^2} \int_{B(0,r)} |\nabla u  |^2 \;dy\right)^{\frac{1}{2}}+ \|g \|_{\infty}r^2 \notag \\
&\leq & \left(\frac{2}{r}+ 2\|g\|_\infty r\right)^{1/2}+ 2\|g \|_{\infty}r \notag \\
&\leq &  \left(\frac{2}{r}\right)^{1/2}+\left(2 \|g\|_\infty r\right)^{1/2}+ 2\|g \|_{\infty}r \notag \\
&\leq &  2r^{-1/2} + 4(r\|g\|_{\infty})^{1/2},
\end{eqnarray}
where we have used the standard energy bound $\int_{B(x,r)}|\nabla u|^2\; dx \leq 2\pi r+2\pi \|g\|_\infty r^2$ given by Proposition \ref{prop333}, and the fact that  $2r\|g\|_\infty\leq 1$. Now, remember that these computations require $\overline{B}(z,r) \subset \Omega \setminus K$. One can let $r \to \mathrm{dist}(z,K)$ for instance if $z$ lies in a ball $B(x_0,R)$ where $x_0 \in K$, $R \leq 1/(2\norm{g}_{\infty})$ and $B(x_0,2R) \subset \Omega$.
\end{proof}

Next, by arguing similarly to Lemma 21.3  of \cite{DavidBonnet}, we get the following proposition. The difference with  \cite{DavidBonnet}  is that here we state it for  minimizers of the Mumford-Shah functional  in a domain $\Omega$, while  \cite{DavidBonnet} focuses on  global minimizers only.
 
More precisely, once the existence of escaping path (Proposition \ref{prop1}) and the gradient bound (Proposition \ref{prop33}) are given,  the proof can be obtained exactly as in  Lemma 21.3  of \cite{DavidBonnet}. Since the proof is not so long, and crucial for our main result, we  provide the full details for the reader's convenience.

Let us mention that  it is   not difficult  to derive an H\"older estimate  along a single escaping  path for a function that satisfies a gradient estimate such as \eqref{estimateH}.  This is actually ``standard'' in the theory of John domains. 
But the main issue is that  we do not have directly at disposal a nice  curve connecting two given points $x$ and $y$ passing through a common center as it would happen in a John domain. We only know the existence of escaping curves going far away from $K$,  ending at multiple possible centers.
This explains the delicate and  very original proof that was written by G.~David in \cite{DavidBonnet}, and that we write again here for sake of completeness.

\begin{proposition}[\cite{DavidBonnet}] \label{prop3bis} Let $N = 2$ and $(u,K)$ be a Mumford-Shah minimizer in $\Omega$. There exists a constant $r_3>0$ (which only depends on $\norm{g}_{\infty}$) and a universal constant $C \geq 1$ such that for all $r \leq r_3$ with $B(x_0,4r) \subset \Omega$ and for all $x,y$ that lie in the same connected component of $B(x_0,r) \setminus K$ we have
$$|u(x)-u(y)|\leq C r^{1/2}.$$
\end{proposition}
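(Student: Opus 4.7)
The plan is to adapt the argument of \cite[Lemma 21.3]{DavidBonnet}, combining the escaping paths of Proposition \ref{prop1} with the pointwise gradient estimate \eqref{estimateH}. The overall strategy has two parts: first, use escaping paths to move both $x$ and $y$ to points deep inside their common component, where $|\nabla u|$ is controlled; second, connect these deep points by a short path along which $u$ varies by at most $Cr^{1/2}$.

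For the first part, I would take $r_3 \leq \min(\tau, r_2, 1/(2\|g\|_\infty))$ so that all earlier propositions apply. Applying Proposition \ref{prop1} to $x$ with $t_0 = r$ produces a $1$-Lipschitz escaping path $z : [0,r] \to \Omega \setminus K$ with endpoint $x^\ast := z(r)$ satisfying $\mathrm{dist}(x^\ast,K) \geq c_0 r$ and, by the Lipschitz bound, $x^\ast \in B(x_0,2r)$. Integrating \eqref{estimateH} along the path gives
\[
|u(x) - u(x^\ast)| \leq \int_0^r \left( 2(c_0 t)^{-1/2} + 4(\|g\|_\infty c_0 t)^{1/2} \right) dt \leq C r^{1/2},
\]
and the same argument produces an analogous endpoint $y^\ast$ for $y$. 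Since both escaping paths lie in $\Omega \setminus K$ and in $B(x_0, 2r)$, the points $x^\ast$ and $y^\ast$ belong to the same connected component $\tilde A$ of $B(x_0, 2r) \setminus K$ that already contains $x$ and $y$.

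The remaining task is to show $|u(x^\ast) - u(y^\ast)| \leq C r^{1/2}$. My plan is to construct a path $\gamma$ in $\tilde A$ from $x^\ast$ to $y^\ast$ of length $O(r)$ on which $\mathrm{dist}(\cdot, K) \geq \delta r$ for a universal $\delta > 0$; then \eqref{estimateH} gives $|\nabla u| \leq C r^{-1/2}$ along $\gamma$ and the integral is $O(r^{1/2})$. To construct $\gamma$, I would look at the thick region $T := \tilde A \cap B(x_0, 3r) \cap \{\mathrm{dist}(\cdot, K) > c_0 r/4\}$ and cover it by balls of radius $c_0 r / 8$. Each such ball lies entirely in $\Omega \setminus K$, so a packing argument based on the area of $B(x_0, 3r)$ gives a universally bounded number of disjoint such balls. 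Since overlapping balls contained in $B(x_0, 3r) \setminus K$ keep their centers in the same component of $B(x_0, 3r) \setminus K$, and since Proposition \ref{prop2} bounds the number of such components, one expects $x^\ast$ and $y^\ast$ to be joinable in $T$ by a chain of $O(1)$ overlapping balls.

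The main obstacle, as the authors emphasize, lies in this final chaining: escaping paths from different starting points in $\tilde A$ may end at different far-from-$K$ centers, with no a priori quantitative way to match them up, since $\tilde A$ need not be a John domain. The delicate point is to guarantee that the thick component of $\tilde A$ containing $x^\ast$ and the one containing $y^\ast$ coincide (possibly after enlarging the working ball up to $B(x_0, 4r)$); this is where I would closely follow the original argument of \cite{DavidBonnet}, using iteratively the existence of escaping paths from intermediate points along a continuous path from $x^\ast$ to $y^\ast$ in $\tilde A$ in order to route it through the thick region while keeping control on its length.
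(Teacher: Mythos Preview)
Your first part --- escaping from $x$ and $y$ to points $x^\ast,y^\ast$ at distance $\geq c_0 r$ from $K$ and integrating the gradient bound \eqref{estimateH} along the paths --- matches exactly what the paper does (its Step~2).

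The gap is in your second part. Your plan to cover the thick set $T=\tilde A\cap B(x_0,3r)\cap\{\mathrm{dist}(\cdot,K)>c_0r/4\}$ by balls of radius $c_0r/8$ and chain from $x^\ast$ to $y^\ast$ does not go through: a packing bound controls how many \emph{disjoint} balls fit, but it does not produce a connected chain, and there is no reason $T$ itself is connected. The invocation of Proposition~\ref{prop2} does not help here either, since $x^\ast$ and $y^\ast$ already lie in the same component of $B(x_0,2r)\setminus K$; the issue is connectivity of the \emph{thick} part, which that proposition says nothing about. You correctly sense this and defer at the end to ``the original argument,'' but that argument is the entire content of the proof, and your sketch does not capture its mechanism.

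What the paper (following \cite{DavidBonnet}) actually does is quite different from a geometric chaining in $T$. One fixes a \emph{finite} $(c_0r/10)$-dense set $E\subset B(x_0,2r)\setminus K$ with universally bounded cardinality $N_0$, and a continuous path $\xi:[0,1]\to B(x_0,r)\setminus K$ from $x$ to $y$. For each $t$, the escaping path from $\xi(t)$ ends near some $z\in E$; call $E(t)\subset E$ the set of reachable anchors. The key device is $t(z):=\sup\{t:z\in E(t)\}$: starting from $t_0=0$, one picks an anchor $z_0\in E(t_0)$ that persists for $t$ slightly larger, jumps to $t_1=t(z_0)$, picks $z_1\in E(t_1)$, and so on. Each new $z_i$ is forced to be different from all predecessors (since those can no longer lie in $E(t)$ for $t>t_i$), so the process terminates in at most $N_0$ steps, and the triangle inequality over $2N_0$ estimates of type \eqref{holder1} yields the bound. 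The finiteness of $E$ is what replaces the missing John-center; your ball-covering of $T$ does not provide an equivalent termination argument.
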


\begin{proof}  We can assume that $x_0$ is the origin. We let $c_0,\tau>0$ be the constants of Proposition~\ref{prop1}. We may assume that $r_3\leq \tau$ and we let $r \leq r_3$ be given such that $B(0,4r) \subset \Omega$. Recall that Proposition~\ref{prop1} guarantees the existence of escaping paths, which means in particular that for all $x\in B(0,r) \setminus K$, there exists a $1$-Lipschitz mapping $z:[0,r_3]\to \Omega \setminus K$ such that $z(0)=x$ and 
\begin{eqnarray}
    {\rm dist}(z(t),K)\geq c_0 t \text{ for } t\in [0,r]. \label{lowcurve0}
\end{eqnarray}
 We can also assume $r_3 \leq r_2/2$ so that Proposition \ref{prop33} also applies and says in particular that for all $x\in B(0,2r) \setminus K$,
 \begin{eqnarray} 
    |\nabla u|(x)\leq 2 {\rm dist}(x,K)^{-\frac{1}{2}} + C_1, \label{estIM}
\end{eqnarray}
with $C_1= 4 (2\|g\|_\infty  r_3)^{1/2}$.

Now let $x,y$ be two points in the same connected component of $B(0,r)\setminus K$, as in the statement of the proposition.
 Set $B=B(0,r)$ and $B_1:=B(0,2r)$.
Then, choose a finite subset $E$ of $B_1\setminus K$ which is $10^{-1}c_0r$ dense in $B_1\setminus K$. We can find such a set $E$ with less than $N_0$ elements, depending only on $c_0$. 

    Let $\xi:[0,1]\to B\setminus K$ be a continuous curve such that  $\xi(0)=x$ and $\xi(1)=y$.  The rest of the proof will be  carried out in several steps, following the lines of   \cite[Lemma 21.3]{DavidBonnet}. 

\medskip

\emph{Step 1. Definition of $E(t)$ and $z(t)$}.  For $t\in[0,1]$ we define $E(t)\subset E$ as being the set of points $z\in E$ which can be connected to $\xi(t)$ by an escaping path $\gamma : [0,r] \to B(0,2r) \setminus K$ that is $2$-Lipschitz, satisfies  $\gamma(0)=\xi(t)$, $\gamma(r)=z$, and such that 
\begin{eqnarray}
{\rm dist}(\gamma(s),K)\geq c_0 10^{-1} s \text{ for } 0\leq s\leq r. \label{escapeFin}
\end{eqnarray} 
Notice that we have added a factor $10^{-1}$ in \eqref{escapeFin} compared to \eqref{lowcurve0}, which a less restrictive condition. It is easy to see that $E(t)$ is not empty. Indeed, by \eqref{lowcurve0} we have  the existence of an escaping path $\gamma : [0,r] \to B(0,2r) \setminus K$ starting from $\xi(t)$, which is $1$-Lipschitz, and  satisfies something 10 times more restrictive than \eqref{escapeFin}. The   path fulfills almost all the requirement on $\gamma$ excepted that its endpoint $z'$ may not belong to the finite set $E$. But there exists $z\in E$ such that $|z-z'|\leq c_010^{-1}r$.
And we also know that  ${\rm dist}(z',K)\geq c_0r$.
Therefore, adding the segment $|z-z'| \subset B(0,2r) \setminus K$ to the path $\gamma$ and re-parameterizing it on $[0,r]$ as a $2$-Lipschitz function, yields the desired path, which proves that $E$ is not empty. 

Since there is some room in the inequalities, it is also easy to prove the following fact that will be used later: for all $t\in [0,1]$, we can find a point $z(t)\in E(t)$ which also satisfies
\begin{eqnarray}
z(t) \in E(t') \text{ for all } t' \text{ in some neighborhood of } t \in [0,1]. \label{subtile}
\end{eqnarray}
 This is possible since for all $t'$ close enough to $t$, the point $\xi(t')$ is connected to $\xi(t)$ by a small segment in $B(0,2r) \setminus K$.
 
 \medskip
 
\emph{Step 2. H\"older estimate for points in $E(t)$}. The next step is to prove that 
\begin{eqnarray}
|u(\xi(t))-u(z))|\leq C r^{1/2} \text{ for all } z \in E(t). \label{holder1}
\end{eqnarray}
For this purpose we take a path $\gamma$ from $\xi(t)$ to $z$ satisfying \eqref{escapeFin} and we use the gradient estimate  \eqref{estIM}.  Notice that $u$ is a $C^1$ function in $\Omega \setminus K$. We can then write
\begin{eqnarray}
|u(\xi(t))-u(z)|&\leq & \int_{0}^r |\nabla u(\gamma(s)| |\gamma'(s)| \,ds\notag \\
&\leq & 2 \int_{0}^r   2 {\rm dist}(\gamma(s),K)^{-\frac{1}{2}} + C_1 \,ds\notag \\
&\leq & 2 \int_{0}^r   2 c_0^{1/2}10^{-1/2} s^{-\frac{1}{2}} + C_1 \,ds\notag \\
&\leq& C r^{1/2}, \notag
\end{eqnarray}
where $C$ depends only on $c_0$ and $C_1$, which proves  \eqref{holder1}.

\medskip

\emph{Step 3. Definition of $t(z)$ and iterative argument}. Next we define
$$E_1:=\{z\in E \; |\; z\in E(t) \text{ for some } t \in [0,1]\},$$
and then for $z\in E_1$ we define
$$t(z):=\sup \{t \; |\; z \in E(t)\}.$$
Notice that for $z\in E_1$, passing to the supremum and using the continuity of $u\circ \xi$, we still have from \emph{Step 2} the following estimate
\begin{eqnarray}
|u(\xi(t(z)))-u(z)|\leq Cr^{1/2}, \label{estimSup}
\end{eqnarray}
with the same constant $C>0$ as before.

We start now an iterative argument: set $t_0=0$ and $z_0=z(t_0)$, as defined in \eqref{subtile}. Then $z_0\in E(t_0)$ and by \eqref{holder1} we get
$$|u(x)-u(z_0)|\leq Cr^{1/2}.$$
Next, set $t_1=t(z_0)$. It follows that $t_1>t_0$, by definition of $z_0=z(t_0)$ and because of \eqref{subtile}. We also  know from \eqref{estimSup} that 
\begin{eqnarray}
|u(\xi(t_1))-u(z_0)|\leq Cr^{1/2}. \label{estimSup}
\end{eqnarray}
We continue like this iteratively, by defining $z_i=z(t_i)$ and $t_i=t(z_{i-1})$, until we reach $t_i=1$. The main point is that $t_{i+1}>t_i$ for all $i$ (as already mentioned above for $t_0$ and $t_1$), and more important:  when $t_i < 1$,
\begin{eqnarray}
\text{each $z_i$ is different from all its predecessors.} \label{aMontrer} 
\end{eqnarray}
Indeed,  $z_i=z(t_i)$, hence $z_i \in E(t')$ for values $t'>t_i$. But then for all $j<i$, $t_i\geq t_{j+1}=t(z_j)$ thus $z_j$ could not lie in $E(t')$ for any $t'>t_i$, as $z_i$ does. So $z_i \not = z_j$, as needed. This proves \eqref{aMontrer}.
Now we remember that $E$ was a finite set, with cardinality $N_0$ bounded by a constant depending only on the constant $c_0$. Since all the $z_i$ are disjoint, the iterative argument must stop after a maximum of $N_0$ steps. This means that $t_i$ reach the maximum value of $1$ after possibly $N_0$ iterations or less. 

\medskip

\emph{Step 4. Conclusion}. We then conclude by adding a maximum of $N_0$ inequalities of the following two  types: the first one is 
\begin{eqnarray}
|u(\xi(t_{i}))-f(z_i)|\leq Cr^{1/2},
\end{eqnarray}
valid because $z_i=z(t_i)\in E(t_i)$ thus \eqref{holder1} applies, and the second one is
\begin{eqnarray}
|u(\xi(t_{i+1}))-f(z_i)|\leq Cr^{1/2},
\end{eqnarray}
valid because $\xi(t_{i+1})=\xi(t(z_i))$ and \eqref{estimSup} applies.  With the triangle inequality, we end up to $\abs{u(x) - u(y)} \leq 2N_0C r^{1/2}$.
\end{proof}

 \section{Proof of main result}
 \label{proofMain}

We are now in position to prove the main result of this note.

\begin{proof}[Proof of Theorem \ref{main}]
    Let $x_0\in K$ be given.  Notice that  the set $E(x_0)$ defined in Conjecture~\ref{conjecture}, is exactly the set of values $\ell \in \R$ for which there exists a sequence $(y_k)_k$ in $\Omega \setminus K$ such that $y_k \to x_0$ and $u(y_k) \to \ell$. 

Assume for a contradiction that $E(x_0)$ contains more than $3$ points. Let $a_k$, $k=1,\dots , 4$ be four points in $E(x_0)$ and set $\delta:= \min_{k,k'}|a_k-a_{k'}|$, the minimal distance between them.
From Proposition \ref{prop3}, we can find a radius  $r_0 > 0$,  such that $B(x_0,r_0) \subset \Omega$ and $B(x_0,r_0)\setminus K$ has at most $C_0$ connected components intersecting $B(x_0,r_0/2)$. 
We can additionnally choose $r_0$ so small that
\begin{eqnarray}
2Cr_0^{1/2}\leq \delta/10, \label{picc}
\end{eqnarray}
where $C>0$ is the constant from Proposition \ref{prop3bis}. 
    Since $x_0$ is adherent to only finitely many components of $B(x_0,r_0)\setminus K$, we can take a smaller radius $r_1<r_0/2$ such that all points of $B(x_0,r_1)\setminus K$ belong to a component of $B(x_0,r_0)\setminus K$ touching $x_0$. Moreover, Proposition \ref{prop3} ensures that there are at most $3$ such components, which we denote by $\{A_n\}$ with $n=1,2,3$, and $A_n$ could possibly be empty.

    Now let $y_k\to x_0$ be a sequence converging to $x_0$. Since $u$ is bounded, there is a subsequence (not relabelled) such that $u(y_k)$ converges to a value $\ell \in \R$.
  We can further extract a subsequence such that the sequence $(y_k)_k$  lies in a single component $A_n$. 
Then, consider another sequence $(z_k)_{k}$ converging to $x_0$ in the same component $A_n$, with $u(z_k)\to \ell'$. By Proposition \ref{prop3bis}, it holds
$$|u(y_k)-u(z_{k})|\leq C r_0^{1/2}$$
and passing to the limit, we obtain
$$|\ell-\ell'|\leq C r_0^{1/2}.$$
    We deduce that  all the possible limit values $\ell$ coming from $A_n$ are contained in an interval $I_n\subset \R$ of length at most $2Cr_0^{1/2}$. 
 From \eqref{picc} we  get   $|I_n|\leq \delta /10$  (and   $I_n$ could be empty if $A_n$ is empty). Then we have  
  $$E(x_0)\subset I_1 \cup I_2 \cup I_3.$$
 But this contradicts the fact that $E(x_0)$ contains four distinct points of mutual distance $|a_k-a_{k'}|\geq \delta$. We therefore conclude that $E(x_0)$ contains at most three points.
\end{proof}


\bibliographystyle{plain}
\bibliography{biblioDG}

\end{document}